\newtheorem{theorem}{Theorem}
\newtheorem{lemma}[theorem]{Lemma}
\newtheorem{proposition}[theorem]{Proposition}
\newtheorem{corollary}[theorem]{Corollary}
\newtheorem{definition}[theorem]{Definition}
\newtheorem{theoremint}{Theorem}
\newtheorem*{conjecture}{Conjecture}
\theoremstyle{definition}
\newcommand{\pn}{{\mathbb{P}^n}}
\newcommand{\p}[1]{{\mathbb{P}^{#1}}}
\newcommand{\op}[1]{{\mathcal O}_{\mathbb{P}^{#1}}}
\newcommand{\KK}{{\mathbb K}}
\newcommand{\ZZ}{{\mathbb Z}}
\newcommand{\PP}{{\mathbb P}}
\newcommand{\calh}{{\mathcal H}}
\newcommand{\calu}{{\mathcal U}}
\newcommand{\cali}{{\mathcal I}}
\newcommand{\caln}{{\mathcal N}}
\newcommand{\OO}{{\mathcal O}}
\newcommand{\gothm}{\mathfrak{m}}
\newcommand{\im}{\operatorname{Im}}
\begin{document}

\title{$p$-Buchsbaum rank $2$ bundles on the projective space}

\author{Marcos Jardim}
\author{Simone Marchesi}

\address{IMECC - UNICAMP \\
Departamento de Matem\'atica \\ Caixa Postal 6065 \\
13083-970 Campinas-SP, Brazil}

\begin{abstract}
It has been proved by various authors that a normalized, $1$-Buchsbaum rank $2$ vector bundle on $\PP^3$ is a nullcorrelation bundle, while a normalized, $2$-Buchsbaum rank $2$ vector bundle on $\PP^3$ is an instanton bundle of charge $2$. We find that the same is not true for $3$-Buchsbaum rank $2$ vector bundles on $\PP^3$, and propose a conjecture regarding the classification of such objects.
\end{abstract}

\maketitle

\section*{Introduction}

A coherent sheaf $E$ on $\PP^3$ is said to be \emph{$p$-Buchsbaum} if $p$ is the minimal power of the irrelevant ideal which annihilates $H^1_*(E)$. The complete list of $p$-Buchsbaum rank 2 bundles on $\PP^3$ for $p\le2$ has been established by several authors, see for example \cite{E,ES,KR,MMR,MR}. More precisely, we have the following.

\begin{theoremint}\label{thm-equiv}
Let $E$ be a normalized $p$-Buchsbaum rank 2 vector bundle on $\PP^3$. Then
\begin{itemize}
\item $p=0$ if and only if $E$ is direct sum of line bundles;\\
\item $p=1$ if and only if $E$ is a null correlation bundle, i.e. an instanton bundle of charge $1$;\\
\item $p=2$ if and only if $E$ is an instanton bundle of charge $2$.
\end{itemize}
\end{theoremint}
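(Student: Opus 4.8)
The plan is to analyze, in all three cases, the Hartshorne--Rao module $M=H^1_*(E)$ together with the rigidity coming from the fact that $E$, being normalized of rank $2$, is self-dual: $E^\vee\cong E(-c_1)$. Write $S=k[x_0,\dots,x_3]$ and $\gothm=(x_0,\dots,x_3)$, so that $M$ is a finite-length graded $S$-module and, by hypothesis, $p$ is minimal with $\gothm^pM=0$. Serre duality gives $H^2_*(E)\cong{}^\vee M(c_1+4)$, and passing to the subcanonical curve cut out by a general twisted section of $E$ — a general section of $E(t)$, $t\gg0$, vanishes along a smooth curve $C_t$ with $0\to\OO(-t)\to E\to\cali_{C_t}(t+c_1)\to 0$ and $H^1_*(\cali_{C_t})\cong M(-c_1-t)$ — shows that $M$ is itself self-dual up to a shift and that $\deg C_t$, $p_a(C_t)$ are determined by $c_2(E)$ and $t$. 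The main tool will be the Horrocks monad of $E$: since $E$ is rank $2$ and self-dual it is the cohomology of a self-dual monad
\[
0\longrightarrow\A\longrightarrow\BB\xrightarrow{\ \beta\ }\A^\vee(c_1)\longrightarrow 0
\]
with $\A,\BB$ sums of line bundles and $\rk\BB=2\rk\A+2$; passing to sections identifies $M$, as an $S$-module, with $\coker\bigl(H^0_*(\BB)\xrightarrow{\beta_*}H^0_*(\A^\vee(c_1))\bigr)$, so the graded Betti numbers of $M$ are governed by $\A$ and $\BB$. The case $p=0$ is then immediate: $M=0$ forces $H^2_*(E)={}^\vee M(c_1+4)=0$ as well, so Horrocks' splitting criterion makes $E$ a sum of line bundles, and the converse is clear.

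For $p=1$ we have $\gothm M=0$, so $M\cong\bigoplus_j k(b_j)$ with trivial module structure and $\{b_j\}$ symmetric, by self-duality. I would first show that $M$ is concentrated in a single degree, $M\cong k(d)^{\oplus r}$: a $\gothm$-annihilated Rao module supported in two or more degrees is incompatible with the monad above once $\rk\BB=2\rk\A+2$ is imposed (equivalently, with the structure of the spectrum of $E$). The normalization then fixes $d$ as the degree $-1$ occupied by the null correlation bundle's cohomology, and forces $c_1=0$. It remains to see $r=1$: the minimal free resolution of $k(d)^{\oplus r}$ is $r$ twisted Koszul complexes, so $\A^\vee(c_1)$ must be $\OO(e)^{\oplus r}$ up to reducible summands, and $\rk\BB=2r+2$ together with the requirement that $\beta_*$ present exactly $k(d)^{\oplus r}$ leaves room only for $r=1$, the monad being $0\to\OO(-1)\to\OO^4\to\OO(1)\to 0$; hence $c_2(E)=1$ and $E$ is the null correlation bundle up to twist. (Alternatively, restricting $E$ to a general plane and invoking the classification of rank-$2$ bundles on $\PP^2$ forces $c_2(E)=1$.) The converse is the Bott-formula computation $H^1_*(N)=k$ in degree $-1$.

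For $p=2$ we have $\gothm^2M=0$ with $\gothm M\neq0$. The filtration $0\to\gothm M\to M\to M/\gothm M\to0$ has $\gothm$-annihilated ends, so $M$ occupies at most three consecutive degrees, and self-duality makes its Hilbert function symmetric — either $(r,r)$ on two consecutive degrees or $(a,b,a)$ on three. The heart of the matter, and the step I expect to be the real obstacle, is to prove that the only such $M$ realized by a rank-$2$ bundle is the one with Hilbert function $(2,2)$ and nontrivial multiplication. The strategy is once more to feed each candidate through the self-dual monad: self-duality of the minimal free resolution of $M$, together with $\rk\BB=2\rk\A+2$ and the requirement that $\beta_*$ cut out $M$ on the nose, pins the twists in $\A$ and $\BB$ down very tightly, and one must rule out the three-step shape $(a,b,a)$, the two-step shapes with $r\neq2$, and in particular the Gorenstein module $k[t]/(t^2)$ and its multiples. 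Once $M$ is forced to be the $(2,2)$ module the monad is $0\to\OO(-1)^2\to\OO^6\to\OO(1)^2\to0$, so $c_2(E)=2$ and $E$ is an instanton bundle of charge $2$; conversely a direct cohomology computation (via the monad, or via the spectrum $\{0,0\}$) shows the charge-$2$ instanton has $H^1_*(E)$ equal to this $(2,2)$ module, hence is exactly $2$-Buchsbaum.
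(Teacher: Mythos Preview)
The paper does not prove this theorem at all: it is quoted in the Introduction as an established result, with the proof outsourced entirely to the references \cite{E,ES,KR,MMR,MR}. There is therefore no ``paper's own proof'' to compare your proposal against; any comparison is with the literature, not with this paper.

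As for the content of your sketch: the $p=0$ case via Horrocks' splitting criterion is fine, and the overall architecture --- self-duality of the Rao module, the self-dual Horrocks monad, and the constraint $\rk\BB=2\rk\A+2$ --- is the right framework. But the key exclusions are asserted rather than proved. For $p=1$ you say that a $\gothm$-annihilated module supported in more than one degree is ``incompatible with the monad'' or with the spectrum of $E$, yet you do not actually derive a contradiction; the monad with $\A^\vee=\OO(e_1)\oplus\OO(e_2)$, $e_1\ne e_2$, and $\rk\BB=6$ is not obviously excluded by rank counting alone, and the self-duality constraint on the pair $\{e_1,e_2\}$ does not by itself force $e_1=e_2$. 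For $p=2$ you explicitly flag the elimination of the shapes $(a,b,a)$, the $(r,r)$ modules with $r\ne2$, and the Gorenstein $k[t]/(t^2)$ as ``the real obstacle'' and then do not carry it out; this is precisely where the substance of the argument lies in the cited papers (via bounds on $c_2$, spectrum computations, or restriction to planes), and your proposal does not yet supply it. So what you have is a coherent plan with the decisive steps still missing, rather than a proof.
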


After examining this list, two questions natually arise. First, is every rank $2$ instanton bundle of charge $k$ on $\PP^3$ $k$-Buchsbaum? Second, since every bundle is $p$-Buchsbaum for some sufficiently high $p$, for which values of $p$ can we find a $p$-Buchsbaum rank $2$ bundle which is not instanton?  

The goal of this paper is to provide partial answers to these questions. In particular, we show that every rank $2$ instanton bundle of charge $3$ is $3$-Buchsbaum. However, this is false for instantons of higher charge. On the other hand, we show that the generic instanton of charge $4$ or $5$ is also $3$-Buchsbaum. In addition, we provide an explicit example of a $3$-Buchsbaum bundle of rank $2$ which is not an instanton, and conjecture that every $3$-Buchsbaum rank $2$ bundle on $\PP^3$ is one of these.

\paragraph{\bf Acknowledgments.}
MJ is partially supported by the CNPq grant number 302477/2010-1 and the FAPESP grant number 2011/01071-3. 
SM is supported by the FAPESP post-doctoral grant number 2012/07481-1. We started thinking about instantons as $p$-Buchsbaum bundles after conversations with Joan Pons-Llopis; we thank him for several fruitful discussions.


\section{Preliminaries}\label{sec-prel}

In this section we will fix the notation and recall the basic definitions used throughout this paper.


\subsection{Buchsbaum sheaves}

Let $\KK$ be an algebraically closed field of characteristic zero. Let us denote by $S = \KK[x_0,x_1,x_2,x_3]$ the ring of polynomials in four variables, so that $\PP^3:={\rm Proj}(S)$, and let $\gothm = (x_0,x_1,x_2,x_3)$ denote the irrelevant ideal.

Let $V$ be a $\KK$-vector space of dimension $m+1$, with $V^*$ denoting its dual. The projective space
$\PP(V) = \PP^m$ is understood as the set of equivalence classes of $m$-dimensional subspaces of $V$, or, equivalently, the equivalence classes of the lines of $V^*$.

Given a coherent sheaf $E$ on $\PP^3$, consider the following  graded $S$-module:
$$
H^1_*(E) = \bigoplus_{n\in \ZZ} H^1(E(n)).
$$

\begin{definition}
A coherent sheaf $E$ on $\PP^3$ is said to be $p$-Buchsbaum if and only if $p$ is the minimal power of the irrelevant ideal which annihilates the $S$-module $H^1_*(E)$, i.e. 
$$ p=\min\left\{t \: | \: \gothm^t H^1_*(E) = 0 \right\}. $$
\end{definition}

In this work, we will only consider locally free sheaves on $\p3$.  


\subsection{Monads and regularity}

Recall that a \emph{monad} on a projective variety $X$ fo dimension $n$ is a complex of locally free sheaves on $X$ of the form
$$
M_\bullet : A \stackrel{\alpha}{\longrightarrow} B \stackrel{\beta}{\longrightarrow} C 
$$
such that the map $\alpha$ is injective and the map $\beta$ is surjective. It follows that
$E:=\ker\beta/\im\alpha$ is the only nontrivial cohomology of the complex $M_\bullet$. The coherent sheaf $E$ is called the \emph{cohomology} of $M_\bullet$; it is locally free if and only if the map $\alpha$ is injective in every fiber.

The monad $M_\bullet$ is called a \emph{Horrocks monad} if, in addition:
\begin{description}
\item[i)] $A$ and $C$ are direct sum of invertible sheaves,
\item[ii)] $H^1_*(B) = H^{n-1}_*(B) = 0$.
\end{description}
Furthermore, the monad is also called \emph{minimal} if it satisfies
\begin{description}
\item[iii)] no direct sum of $A$ is isomorphic to a direct sum of $B$,
\item[iv)] no direct sum of $C$ is the image of a line subbundle of $B$.
\end{description}

Let us recall the following result on minimal Horrocks monads, cf. \cite[Thm 2.3]{JM}.

\begin{theorem}\label{thm-JM}
Let $X$ be an arithmetically Cohen--Macaulay variety of dimension $n\geq 3$, and let $E$ be a locally free sheaf on $X$. Then there is a 1-1 correspondence between collections
$$
\{n_1,\ldots,n_r,m_1,\ldots,m_s\} \: \mbox{with}\: n_i \in H^1(E^\lor \otimes \omega_X(k_i)) \: \mbox{and}\: m_j \in H^1(E(-l_j))
$$
for integers $k_i$'s and $l_j$'s, and equivalence classes of Horrocks monads of the form
$$
M_\bullet : \bigoplus_{i=1}^r \omega_X(k_i) \stackrel{\alpha}{\longrightarrow} F \stackrel{\beta}{\longrightarrow} \bigoplus_{j=1}^s \OO_X(l_j).
$$
whose cohomology is isomorphic to $E$.

Moreover, the correspondence is such that $M_\bullet$ is minimal if and only if the elements $m_j$ generate
$H^1_*(E)$ and the elements $n_i$ generate $H^1_*(E^\lor \otimes \omega_X)$ as modules.
\end{theorem}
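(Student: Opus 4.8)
The plan is to realise the correspondence through a double-extension construction modelled on Horrocks' classical monad theorem, and then to verify bijectivity by exhibiting an explicit inverse; the hypotheses that $X$ is arithmetically Cohen--Macaulay and $\dim X = n \ge 3$ enter only to supply the vanishing theorems that make the construction run. First I would assemble the cohomological toolkit. Since $X$ is arithmetically Cohen--Macaulay, $H^i_*(\OO_X)=0$ for $0<i<n$, and by Serre duality $H^i_*(\omega_X(\ell)) \cong H^{n-i}_*(\OO_X(-\ell))^* = 0$ for $0<i<n$ and every $\ell$. Writing $A = \bigoplus_i \omega_X(k_i)$ and $C = \bigoplus_j \OO_X(l_j)$, it follows that $\Ext^1_X(C,A) = \bigoplus_{i,j} H^1_*(\omega_X(k_i-l_j)) = 0$ and $\Ext^2_X(C,A) = \bigoplus_{i,j} H^2_*(\omega_X(k_i-l_j)) = 0$, the second vanishing being where $n \ge 3$ enters. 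Moreover, $E$ being locally free, Serre vanishing together with Serre duality show that $H^i_*(E)$ is concentrated in finitely many degrees for $0<i<n$, hence is a finitely generated (indeed finite length) graded module; in particular it makes sense to speak of systems of generators of $H^1_*(E)$ and of $H^1_*(E^\lor \otimes \omega_X)$.

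The construction from data to monad goes as follows. The collection $(n_i)$, regarded as an element of $\Ext^1_X(E,A) = \bigoplus_i H^1(E^\lor \otimes \omega_X(k_i))$, defines an extension $0 \to A \to Q \to E \to 0$, and $(m_j)$, regarded as an element of $\Ext^1_X(C,E) = \bigoplus_j H^1(E(-l_j))$, defines an extension $0 \to E \to G \to C \to 0$. Applying $\Ext_X(C,-)$ to the first sequence and using $\Ext^2_X(C,A) = \Ext^1_X(C,A) = 0$, there is a \emph{unique} extension $0 \to Q \to F \to C \to 0$ whose image under $\Ext^1_X(C,Q) \to \Ext^1_X(C,E)$ is the class of $G$. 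The complex $M_\bullet : A \to F \to C$, with $\alpha : A \hookrightarrow Q \hookrightarrow F$ and $\beta : F \twoheadrightarrow C$, is then a monad whose only cohomology is $Q/A \cong E$. To see it is a \emph{Horrocks} monad, chase the two short exact sequences against $H^i_*(A) = H^i_*(C) = 0$ for $0<i<n$: this yields $H^1_*(Q) \cong H^1_*(E)$ and identifies $H^1_*(F)$ with the cokernel of the connecting map $H^0_*(C) \to H^1_*(Q)$; as $H^0_*(C)$ is a free graded module over the homogeneous coordinate ring of $X$ whose standard generators are sent by this map to (the classes of) the $m_j$, one gets $H^1_*(F)=0$ exactly when the $m_j$ generate $H^1_*(E)$. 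Dualising $M_\bullet$ and twisting by $\omega_X$ produces a monad $C^\lor \otimes \omega_X \to F^\lor \otimes \omega_X \to A^\lor \otimes \omega_X$ with cohomology $E^\lor \otimes \omega_X$ whose right end $A^\lor \otimes \omega_X = \bigoplus_i \OO_X(-k_i)$ is again a sum of twists of $\OO_X$; the same argument there, combined with the Serre-duality identification of $H^{n-1}_*(F)$ with the graded dual of $H^1_*(F^\lor \otimes \omega_X)$, gives $H^{n-1}_*(F)=0$ exactly when the $n_i$ generate $H^1_*(E^\lor \otimes \omega_X)$. (This dual picture is also why the left end is prescribed to be a sum $\bigoplus_i \omega_X(k_i)$: it is exactly the shape making the dual monad's right end admissible.) Finally, conditions iii)--iv) translate, through the same sequences, into the requirement that the $m_j$ and the $n_i$ be \emph{minimal} systems of generators.

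For the inverse, start from a Horrocks monad $A \xrightarrow{\alpha} F \xrightarrow{\beta} C$ of the stated shape with cohomology $E$, set $Q = \ker\beta$, so that $\alpha$ realises $A$ as a subbundle of $Q$ and one has $0 \to A \to Q \to E \to 0$ together with $0 \to Q \to F \to C \to 0$. Read $(n_i)$ off as the class of the first extension, and $(m_j)$ off as the images of the standard generators of $H^0_*(C)$ under the connecting map $H^0_*(C) \to H^1_*(Q) \cong H^1_*(E)$. That the two assignments are mutually inverse is then checked: ``data $\to$ monad $\to$ data'' is immediate from the construction, while ``monad $\to$ data $\to$ monad'' rests on the uniqueness of the lift $F$ (again $\Ext^1_X(C,A)=0$) together with the fact that a complex of the prescribed shape is recovered, up to equivalence of monads, from its cohomology, from the subbundle $Q = \ker\beta$, and from the induced extension of $C$ by $Q$. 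I expect this last point to be the main obstacle: it forces one to pin down the correct notion of equivalence of monads and to argue that no information beyond the two extension classes is needed to recover $F$ up to that equivalence. The technical engine throughout is the vanishing $\Ext^1_X(C,A) = \Ext^2_X(C,A) = 0$, which simultaneously gives existence and uniqueness of the lift defining $F$; this is also essentially the only point at which $\dim X \ge 3$, rather than merely $\ge 2$, is genuinely needed.
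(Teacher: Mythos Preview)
The paper does not supply its own proof of this statement: it is quoted verbatim from \cite[Thm~2.3]{JM} as background for the rest of the article, so there is no argument here to compare yours against. Your sketch follows the classical Horrocks double-extension construction, which is the standard route and is essentially how the cited reference proceeds; the use of the ACM hypothesis and $n\ge 3$ to obtain $\Ext^1_X(C,A)=\Ext^2_X(C,A)=0$, and hence existence and uniqueness of the lifted extension defining $F$, is exactly the right mechanism.

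One clarification your argument brings out, and which the transcription in the paper leaves slightly implicit: as you note, the condition $H^1_*(F)=0$ holds precisely when the $m_j$ generate $H^1_*(E)$ (and dually for $H^{n-1}_*(F)$), so the bijection in the first part of the statement is really between \emph{generating} collections and Horrocks monads; the ``moreover'' clause should then be read as saying that minimality of the monad corresponds to the $m_j$ and $n_i$ forming \emph{minimal} generating sets. Your proof makes this distinction explicit, which is a genuine improvement in precision over the statement as recorded here.
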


Recall that a coherent sheaf $E$ on $\PP^n$ is said to be $m$-regular in the sense of Castelnuovo--Mumford if $H^i(\PP^n, E(m-i)) = 0$ for $i>0$. Costa and Mir\'o-Roig studied in \cite{CMR} the Castelnuovo--Mumford regularity of the cohomology of a certain class of monads which include monads of the following form:
\begin{equation}\label{model}
\OO_{\PP^{3}}(-l)^{\oplus k} \stackrel{\alpha}{\longrightarrow} \bigoplus_{j=1}^{2+2k}\OO_{\PP^{3}}(b_j) 
\stackrel{\beta}{\longrightarrow} \OO_{\PP^{3}}(d)^{\oplus k} ,
\end{equation}
where $l,k,c\ge1$ and $-l<b_1\le\dots\le b_{2+2k}<d$. Especializing \cite[Thm 3.2]{CMR} to monads of the form (\ref{model}), one obtains the following result.

\begin{proposition} \label{prop-CMR}
If $E$ is the cohomology of a monad of the form (\ref{model}), then $E$ is $m$-regular for any integer $m$ such that
$$ m \ge {\rm max} \{ (k+2)d-(b_1+\cdots+b_{k+3})-2, l \} . $$
\end{proposition}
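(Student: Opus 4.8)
The plan is to read the statement off from \cite[Thm.~3.2]{CMR} by inserting the data of a monad of the form (\ref{model}): that theorem bounds the Castelnuovo--Mumford regularity of the cohomology $E$ of a monad $A\stackrel{\alpha}{\longrightarrow}B\stackrel{\beta}{\longrightarrow}C$ on $\PP^n$ in terms of the twists occurring in $A$, $B$ and $C$. Here $A=\OO_{\PP^3}(-l)^{\oplus k}$, $B=\bigoplus_{j}\OO_{\PP^3}(b_j)$ and $C=\OO_{\PP^3}(d)^{\oplus k}$, and the two members of the maximum have a transparent origin: $l=\operatorname{reg}A$ accounts for the contribution of $A$, while $(k+2)d-(b_1+\cdots+b_{k+3})-2$ is the term governed by the surjection $\beta\colon B\to C$. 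To make the computation concrete I would first break the monad through its display into the short exact sequences
\begin{equation}\label{eq-disp}
0\to\OO_{\PP^3}(-l)^{\oplus k}\to K\to E\to 0,\qquad 0\to K\to\bigoplus_{j=1}^{2+2k}\OO_{\PP^3}(b_j)\to\OO_{\PP^3}(d)^{\oplus k}\to 0,
\end{equation}
where $K=\ker\beta$ and $A\to K$ is the factorization of $\alpha$.

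Running the cohomology long exact sequences of (\ref{eq-disp}) and using that $H^1$ and $H^2$ of every line bundle on $\PP^3$ vanish, one gets $H^1(E(t))\cong H^1(K(t))$ for all $t$, an injection $H^2(E(t))\hookrightarrow H^3(\OO_{\PP^3}(t-l))^{\oplus k}$, $H^2(K(t))=0$ for all $t$, and an injection $H^3(K(t))\hookrightarrow\bigoplus_j H^3(\OO_{\PP^3}(t+b_j))$, the last of which propagates back to $H^3(E(t))=0$ since $H^4$ vanishes on $\PP^3$. As $m\ge l$ and $b_j>-l$ force $m-2\ge l-3$ and $m-3\ge-3-b_1$, the vanishings $H^2(E(m-2))=H^3(E(m-3))=0$ come for free, so the Proposition reduces to the single assertion $H^1(E(m-1))=H^1(K(m-1))=0$. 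Chasing the second sequence of (\ref{eq-disp}) once more identifies $H^1(K(t))$ with the degree-$t$ piece of $\coker\bigl(\Phi\colon\bigoplus_j S(b_j)\to S(d)^{\oplus k}\bigr)$, the graded $S$-module map induced by the matrix of $\beta$; and since this cokernel sheafifies to $\coker\beta=0$, it is a module of finite length. Thus the whole problem becomes: bound the top degree of $\coker\Phi$, because $H^1(E(m-1))=0$ holds exactly when $m-1$ exceeds that top degree.

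For $k=1$ this is explicit. Surjectivity of $\beta$ means that its four entries $f_1,\dots,f_4$, of degrees $d-b_1,\dots,d-b_4$, have no common zero on $\PP^3$; since a single hypersurface meets every positive-dimensional closed subscheme and three forms on $\PP^3$ always have a common zero, this forces $f_1,\dots,f_4$ to be a regular sequence, so $\coker\Phi\cong\bigl(S/(f_1,\dots,f_4)\bigr)(d)$ is an Artinian complete intersection up to twist, whose top degree equals $\sum_j(d-b_j)-4-d=3d-(b_1+\cdots+b_4)-4$. Hence $H^1(E(m-1))=0$ precisely when $m\ge 3d-(b_1+\cdots+b_4)-2$, which is the asserted bound for $k=1$. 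For general $k$ the same mechanism runs through the Buchsbaum--Rim (Eagon--Northcott) complex attached to the $k\times(2+2k)$ matrix of $\beta$ --- surjectivity of $\beta$ being exactly the statement that its ideal of maximal minors is $\gothm$-primary --- and reading off the largest twist appearing in that complex, together with local duality, yields the regularity estimate for $\coker\Phi$ recorded in \cite[Thm.~3.2]{CMR}. I expect this last step, namely tracking the twists in the Buchsbaum--Rim complex uniformly over \emph{all} monads of the form (\ref{model}) and matching the resulting constant with $(k+2)d-(b_1+\cdots+b_{k+3})-2$, to be the only genuine difficulty; the reduction in the previous paragraph is a routine diagram chase.
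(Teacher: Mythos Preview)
Your proposal is correct and follows the same approach as the paper: the paper gives no independent proof of Proposition~\ref{prop-CMR} but simply records it as a direct specialization of \cite[Thm.~3.2]{CMR}, which is exactly what you do in the end. The additional material you supply --- the display sequences, the reduction to bounding the top degree of $\coker\Phi$, and the explicit $k=1$ computation via the complete-intersection socle --- is sound and makes the mechanism behind the cited result transparent, but it goes beyond what the paper itself includes.
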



\subsection{Cohomology of generic instanton bundles}

Recall that a bundle $E$ of rank $2$ on $\PP^{3}$ is called an \emph{instanton bundle} if it is isomorphic to the cohomology of a monad of the following form:
\begin{equation}\label{def-inst}
\OO_{\PP^{3}}(-1)^{\oplus k} \stackrel{\alpha}{\longrightarrow} \OO_{\PP^{3}}^{\oplus 2+2k}
\stackrel{\beta}{\longrightarrow} \OO_{\PP^{3}}(1)^{\oplus k}
\end{equation}
The integer $k$ is called the \emph{charge} of $E$; notice that $c_1(E)=0$ and $c_2(E)=k$. Note also that nullcorrelation bundles are precisely instanton bundles of charge $1$.

Alternatively, an instanton bundle can also be defined as a bundle $E$ on $\PP^{3}$ with $c_1(E)=0$ and satisfying the following cohomological conditions:
$$ h^0(E(-1)) = h^1(E(-2)) = h^{2}(E(-2)) = h^{3}(E(-3)) = 0 . $$

The Hilbert polynomial of an instanton bundle is given by
\begin{eqnarray}\label{hilb poly}
P_E(t) & = & k \chi(\op3(t-1)) + k \chi(\op3(t+1)) - 2(k+1)\chi(\op3(t))  =  \\
 & = & \frac{1}{3} (t+2)((t+3)(t+1)-3k) = \frac{1}{3} (t+2)(t+2+\sqrt{3k+1})(t+2-\sqrt{3k+1}).
\end{eqnarray}
Note also that $P_E(t)=h^0(E(t))-h^1(E(t))$ for $t\ge-2$. 

On another direction, recall that a coherent sheaf $F$ on $\PP^{3}$ is said to have \emph{natural cohomology} if  for each $t\in\ZZ$, at most one of the cohomology groups $H^p(F(t))$, where $p=0,\dots,3$, is nonzero; every torsion free coherent sheaf with natural cohomology is in fact locally free \cite[Lemma 1.1]{HH1}. In addition, every rank $2$ locally free sheaf with $c_1=0$, $c_2>0$ and natural cohomology is an instanton bundle \cite[p. 365]{HH1}.

Hartshorne and Hirschowitz have shown in \cite{HH1} that the generic instanton bundle has natural cohomology. More precisely, let $\cali(k)$ denote the moduli space of rank $2$ locally free instanton sheaves of charge $k$; this is known to be an affine \cite{CO}, irreducible \cite{T1,T2}, nonsingular variety of dimension $8k-3$ \cite{JV}. Let $\caln(k)$ denote the subset of $\cali(k)$ consisting of instanton bundles with natural cohomology; it is easy to see that $\caln(k)$ is open within $\cali(k)$, and \cite[Thm 0.1 (a)]{HH1} tells us that his is nonempty.

More recently, Eisenbud and Schreyer have introduced the notion of \emph{supernatural bundles}, see \cite[p. 862]{ES}: a locally free sheaf on $\p3$ is called supernatural if it has natural cohomology and its Hilbert polynomial has distinct integral roots. Therefore we see that there exist a rank $2$ supernatural bundle with $c_1=0$ and $c_2=k>0$ if and only if $3k+1$ is a perfect square; the first three possible values for $k$ are $k=1,5,8$.

\section{Instanton vs Buchsbaum}\label{sec-inst}

We start by introducing the following function on the positive integers
$$ m(k) = \left\lfloor\sqrt{3k+1}-2 \right\rfloor, $$
where $\left\lfloor\cdot\right\rfloor$ denotes the largest positive integer which is smaller than or equal to the argument.

\begin{proposition}\label{prop p}
A rank $2$ instanton bundle $E$ is $p$-Buchsbaum if and only if $h^1(E(p-2))\ne0$ and $h^1(E(p-1))=0$. In addition, every rank $2$ instanton bundle of charge $k$ is $p$-Buchsbaum for some $m(k)+2\le p\le k$.
\end{proposition}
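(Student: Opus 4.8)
The plan is to first establish the cohomological characterization, and then use it together with Castelnuovo--Mumford regularity bounds (Proposition~\ref{prop-CMR}) and the Hilbert polynomial formula (\ref{hilb poly}) to pin down the range of $p$.

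For the characterization, recall that for a rank $2$ instanton bundle $E$ one has $E^\lor \cong E$ (since $c_1 = 0$), so Serre duality gives $H^1(E(n)) \cong H^2(E(-n-4))^\lor$. The monad (\ref{def-inst}) immediately yields $H^0_*(E) $ supported in degrees $\ge 0$ and, dually, $H^3_*(E) = 0$ entirely, with $H^0(E(n)) = 0$ for $n < 0$. The module $H^1_*(E)$ is what governs the Buchsbaum property. The key point is that $\gothm^t$ annihilates $H^1_*(E)$ if and only if the multiplication maps $H^0(\op3(1)) \otimes H^1(E(n)) \to H^1(E(n+1))$ become zero after $t$ steps, i.e. $H^1(E(n)) \to H^1(E(n+t))$ composed multiplications vanish. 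I would argue that $H^1_*(E)$ is generated in its lowest nonvanishing degree and is ``connected'': concretely, one shows that the multiplication map $H^0(\op3(1)) \otimes H^1(E(n)) \to H^1(E(n+1))$ is surjective for all $n$ (this follows from the monad description, since $H^1_*(E) = \ker(\beta_*)/\im(\alpha_*)$ at the level of the middle cohomology-free monad, or more directly from the fact that $E$ has a minimal Horrocks monad by Theorem~\ref{thm-JM} and $H^1_*(E)$ is generated by the $m_j$). Granting surjectivity of multiplication, $\gothm^t H^1_*(E) = 0$ reduces to checking a single top degree: if $H^1(E(q)) \ne 0$ then by self-duality and surjectivity $H^1(E(n)) \ne 0$ for all $-q-4 \le n \le q$, the nonvanishing range is a symmetric interval $[a, -a-4]$, and $\gothm$ shifts degree up by one, so $\gothm^t$ kills $H^1_*(E)$ exactly when $t$ exceeds the length of this interval measured from the bottom; this gives $p = \min\{t : H^1(E(a+t-1)) = 0\}$ where $a$ is the bottom of the interval, equivalently $p$ is characterized by $h^1(E(p-2)) \ne 0$ and $h^1(E(p-1)) = 0$. (The shift by $-2$ appears because the interval is $[a,-a-4]$ and the generator sits in degree $a$ with $-a-4$ the top; one checks $a = -2 - (p-1)$ at the critical $p$, so the top nonvanishing degree is $p-2$.)

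For the range $m(k)+2 \le p \le k$: the upper bound $p \le k$ follows from Castelnuovo--Mumford regularity. An instanton monad (\ref{def-inst}) is of the form (\ref{model}) with $l = 1$, $d = 1$, and $b_j = 0$ for all $j$; Proposition~\ref{prop-CMR} then gives that $E$ is $m$-regular for $m \ge \max\{(k+2)\cdot 1 - 0 - 2, 1\} = k$. In particular $H^1(E(k-1)) = 0$, so by the characterization $E$ is $p$-Buchsbaum for some $p \le k$. For the lower bound, I would use the Hilbert polynomial: from (\ref{hilb poly}), $P_E(t) = \frac{1}{3}(t+2)\bigl((t+3)(t+1) - 3k\bigr)$, and since $P_E(t) = h^0(E(t)) - h^1(E(t))$ for $t \ge -2$ with $h^0(E(t)) = 0$ for $t < 0$, we get $h^1(E(t)) = -P_E(t)$ for $-2 \le t \le -1$ (and more). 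One checks $P_E(t) < 0$, i.e. $(t+3)(t+1) < 3k$, precisely when $|t+2| < \sqrt{3k+1}$, so $h^1(E(t)) \ne 0$ for all integers $t$ in the range where this holds and $t \ge -2$; the largest such $t$ is $t = \lfloor \sqrt{3k+1} - 2 \rfloor = m(k)$. Hence $h^1(E(m(k))) \ne 0$, which by the characterization forces $p - 2 \ge m(k)$, i.e. $p \ge m(k) + 2$.

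The main obstacle I expect is the surjectivity of the multiplication maps $H^0(\op3(1)) \otimes H^1(E(n)) \to H^1(E(n+1))$ throughout the relevant range, which is what makes the nonvanishing locus of $h^1(E(\cdot))$ a single interval and reduces the Buchsbaum condition to a statement about two consecutive degrees; this is plausible from the minimal Horrocks monad picture (Theorem~\ref{thm-JM}) but requires care at the boundary of the interval, where one must also rule out $h^1(E(p-1)) = 0$ but $h^1(E(p)) \ne 0$ re-occurring. A secondary subtlety is justifying $h^0(E(t)) = 0$ in exactly the degrees needed so that $h^1(E(t)) = -P_E(t)$ can be read off for $t \le m(k)$; this follows from $h^0(E(-1)) = 0$ (part of the instanton conditions) together with $c_1(E) = 0$ forcing $h^0(E) $ to behave correctly, but should be stated carefully.
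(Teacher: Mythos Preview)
Your overall route is the same as the paper's --- Theorem~\ref{thm-JM} for the module structure of $H^1_*(E)$, Proposition~\ref{prop-CMR} for the upper bound $p\le k$, and the Hilbert polynomial for the lower bound $p\ge m(k)+2$ --- and the bound arguments are essentially identical to the paper's (for the lower bound you can drop the concern about $h^0$: once $h^3(E(t))=0$, $\chi(E(t))<0$ already forces $h^1(E(t))>0$).

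Where you diverge is in the characterization, and there the argument is both more complicated than necessary and partly incorrect. The claim that the $H^1$-nonvanishing locus is a symmetric interval $[a,-a-4]$ is wrong for instantons: Serre duality gives $H^1(E(q))\cong H^2(E(-q-4))^\lor$, not $H^1(E(-q-4))^\lor$, and in fact $h^1(E(-2))=0$ by the instanton condition, so the $H^1$ support is $[-1,q]$, never symmetric about $-2$. The discussion of $a=-2-(p-1)$ is therefore off.

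The paper's observation is that none of this is needed. Since the instanton monad (\ref{def-inst}) is a minimal Horrocks monad with $C=\op3(1)^{\oplus k}$, Theorem~\ref{thm-JM} says $H^1_*(E)$ is generated in the single degree $-1$. This one fact does all the work: it forces surjectivity of multiplication $S_1\otimes H^1(E(n))\to H^1(E(n+1))$ for $n\ge -1$, so once $h^1(E(p-1))=0$ one has $h^1(E(t))=0$ for all $t\ge p-1$ (your ``re-occurrence'' worry disappears immediately), and conversely any nonzero class in degree $p-2$ lies in $\gothm^{p-1}\cdot H^1(E(-1))$, so $\gothm^{p-1}H^1_*(E)\ne 0$. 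That is the entire argument for the equivalence; you had the right tool in hand but did not use it directly.
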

\begin{proof}
By Theorem \ref{thm-JM}, we get that $H^1_*(E)$ is generated in $H^1(E(-1))$. Thus if $h^1(E(p-2))\ne0$ and $h^1(E(p-1))=0$ (and hence $h^1(E(t))=0$ for every $t\ge p-1$), then $H^1_*(E)$ must be $p$-Buchsbaum. Conversely, if $E$ is $p$-Buchsbaum, then $h^1(E(p-2))\ne0$ (otherwise, $H^1_*(E)$ would be annihilated by the $(p-1)$-th power of the irrelevant ideal) and $h^1(E(p-1))=0$.

By Proposition \ref{prop-CMR}, we have that $E$ is $k$-regular (cf. also \cite[Cor. 3.3]{CMR}). Hence $H^1(E(k-1)) = 0$, and it follows that every rank $2$ instanton bundle is at most $k$-Buchsbaum.

Finally, note from (\ref{hilb poly}) that for $-1\le t\le m(k)$ we have $\chi(E(t))<0$. Since $h^3(E(t))=0$ in this range, it follows that $h^1(E(t))\ne 0$ for $t=m(k)$. Thus every rank $2$ instanton bundle is at least $(m(k)+2)$-Buchsbaum.
\end{proof}

Since $m(3)+2=3$, the first immediate consequence of the previous Proposition is given by the following Corollary.

\begin{corollary}
Every rank $2$ instanton bundle of charge $3$ is $3$-Buchsbaum.
\end{corollary}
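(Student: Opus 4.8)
The plan is to simply specialize the numerical bounds of Proposition~\ref{prop p} to the case $k=3$. First I would evaluate $m(3) = \lfloor \sqrt{10} - 2 \rfloor$: since $3 < \sqrt{10} < 4$ we have $1 < \sqrt{10} - 2 < 2$, so $m(3) = 1$ and therefore $m(3)+2 = 3 = k$. Proposition~\ref{prop p} then says that a rank $2$ instanton bundle of charge $3$ is $p$-Buchsbaum for some $p$ with $3 \le p \le 3$, which forces $p = 3$.

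For a self-contained argument one can instead verify the two cohomological criteria of Proposition~\ref{prop p} directly with $p = 3$, namely $h^1(E(1)) \ne 0$ and $h^1(E(2)) = 0$. The vanishing $h^1(E(2)) = 0$ follows from Proposition~\ref{prop-CMR} (see also \cite[Cor.~3.3]{CMR}), which gives that $E$ is $3$-regular. For the non-vanishing, the Hilbert polynomial formula (\ref{hilb poly}) yields $\chi(E(1)) = \tfrac13 \cdot 3 \cdot (4\cdot 2 - 9) = -1 < 0$; since $h^0(E(1)) \ge 0$ and $h^3(E(1)) = h^0(E(-5)) = 0$ (using $E^\lor \cong E$ together with the instanton vanishing $h^0(E(-1)) = 0$), we conclude $h^1(E(1)) \ne 0$. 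Together these two facts say precisely that $E$ is $3$-Buchsbaum.

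There is no genuine obstacle here: the statement is an immediate consequence of Proposition~\ref{prop p}, the only point needing verification being the elementary floor computation $m(3) = 1$ that makes the admissible interval $[m(k)+2,\,k]$ collapse to a single point. All of the substance sits in Proposition~\ref{prop p}, and ultimately in the regularity estimate of Proposition~\ref{prop-CMR} that produces the crucial vanishing $h^1(E(2)) = 0$.
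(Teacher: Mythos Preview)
Your proposal is correct and follows exactly the paper's approach: the paper simply observes that $m(3)+2=3$, so that the interval $[m(k)+2,k]$ from Proposition~\ref{prop p} degenerates to the single value $p=3$. Your second paragraph merely unpacks the proof of Proposition~\ref{prop p} in this special case, so there is nothing to add.
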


However, it is not true that every rank $2$ instanton bundle of charge $3$ has natural cohomology, as observed in 
\cite[Example 1.6.1]{HH1}. Indeed, recall that an instanton bundle $E$ is called a \emph{'t Hooft instanton} if
$h^0(E(1))\ne 0$, cf. \cite{BF}; more formally, consider the set
$$ \calh(k) := \{ E \in \cali(k) ~|~ h^0(E(1))\ne 0 \} ~, $$
which is known to be a locally closed subvariety of $\cali(k)$ of dimension $5k+4$, irreducible and rational \cite[Thm. 2.5]{BF}. On the other hand, let $\calu(k):=\cali(k)\setminus\caln(k)$, the subvariety of ``unnatural" instanton bundles.

\begin{lemma}
For every $k\ge3$, we have $\calh(k)\subset\calu(k)$, while $\calh(3)=\calu(3)$.
\end{lemma}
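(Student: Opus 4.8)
The plan is to establish the two inclusions separately, using throughout the Hilbert polynomial formula (\ref{hilb poly}), the self-duality $E\cong E^\lor$ of a rank $2$ bundle with $c_1=0$ together with Serre duality on $\PP^3$ (so that $h^i(E(t))=h^{3-i}(E(-4-t))$), the fact recorded in the proof of Proposition \ref{prop p} that $H^1_*(E)$ is generated in degree $-1$ (hence $h^1(E(t))=0$ for $t\le-2$), and the $k$-regularity of a charge-$k$ instanton from Proposition \ref{prop-CMR}. For $\calh(k)\subset\calu(k)$ with $k\ge3$, it suffices to show that a 't Hooft instanton of charge $k\ge3$ has no natural cohomology. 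Indeed, if $E\in\calh(k)$ had natural cohomology, then at the twist $t=1$ at most one of $h^0(E(1)),\dots,h^3(E(1))$ is nonzero; since $h^0(E(1))\ne0$ by hypothesis, the other three vanish and so $\chi(E(1))=h^0(E(1))\ge1$. But (\ref{hilb poly}) gives $\chi(E(1))=P_E(1)=8-3k$, which is negative for $k\ge3$ — a contradiction. Hence $\calh(k)\cap\caln(k)=\emptyset$, and in particular $\calh(3)\subset\calu(3)$.

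For the reverse inclusion $\calu(3)\subset\calh(3)$ I would argue the contrapositive: an instanton $E$ of charge $3$ with $h^0(E(1))=0$ has natural cohomology. Granting $h^0(E(1))=0$, monotonicity of $h^0$ gives $h^0(E(t))=0$ for all $t\le1$, whence by Serre duality $h^3(E(t))=h^0(E(-4-t))=0$ for all $t\ge-5$; as recalled above $h^1(E(t))=0$ for $t\le-2$, while $3$-regularity gives $h^1(E(t))=0$ for $t\ge2$; dualizing these last two statements yields $h^2(E(t))=h^1(E(-4-t))=0$ for $t\ge-2$ and for $t\le-6$. Collecting these vanishings, the twists at which $h^0,h^1,h^2,h^3$ may respectively fail to vanish lie in the pairwise disjoint ranges $t\ge2$, $-1\le t\le1$, $-5\le t\le-3$, $t\le-6$ (with all four vanishing at $t=-2$), so at every $t$ at most one of the groups $H^i(E(t))$ is nonzero; that is, $E$ has natural cohomology and $E\in\caln(3)$. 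Combining this with the first part gives $\calh(3)=\calu(3)$.

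The step I expect to be the main obstacle is the cohomology bookkeeping in the second part: one has to be sure that $3$-regularity (governing the positive twists) together with the instanton definition and Serre duality (governing the negative twists) really do confine the four cohomology groups to non-overlapping ranges once $h^0(E(1))=0$ is assumed. The conceptual content is that $t=1$, and by duality $t=-5$, is the only twist at which naturality is a nonvacuous condition, and there it is equivalent to $h^0(E(1))=0$: indeed $h^2(E(1))=h^3(E(1))=0$ and $\chi(E(1))=P_E(1)=-1$ force $h^1(E(1))=h^0(E(1))+1\ge1$, so $H^1(E(1))\ne0$ unconditionally, and naturality at $t=1$ holds precisely when $h^0(E(1))=0$.
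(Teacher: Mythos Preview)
Your proof is correct and follows essentially the same approach as the paper's. For the first inclusion both you and the paper use that $\chi(E(1))=8-3k<0$ forces $h^1(E(1))\ne0$, which together with $h^0(E(1))\ne0$ violates naturality; for the second you argue by contrapositive while the paper argues directly, but the underlying cohomological bookkeeping (stability/instanton condition for $h^0$ and $h^3$, generation of $H^1_*(E)$ in degree $-1$ and $3$-regularity for $h^1$ and $h^2$, plus Serre duality) is identical, pinning down $t=1$ (and its dual $t=-5$) as the only twist where naturality can fail.
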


\begin{proof}
If $E$ is a rank $2$ instanton bundle of charge $k\ge3$, then $h^1(E(1))\ne0$ (because $\chi(E(-1))<0$). Hence if $E$ is a 't Hooft instanton, then it does not have natural cohomology, showing that $\calh(k)\subset\calu(k)$.

Conversely, let now $E$ be a rank $2$ instanton bundle of charge $3$ which does not have natural cohomology. We then know that
\begin{itemize}
\item[(i)] $h^0(E(t))=0$ for $t\le0$;
\item[(ii)] $h^1(E(t))=0$ for $t\ne-1,0,1$;
\item[(iii)] $h^2(E(t))=0$ for $t\ne-5,-4,-3$;
\item[(iv)] $h^3(E(t))=0$ for $t\ge-4$.
\end{itemize}
The last two claims are obtained by Serre duality and the fact $E\simeq E^*$. Therefore the only way in which $E$ may fail to have natural cohomology is if $h^0(E(1))=h^3(E(-5))\ne0$. It follows that $\calu(3)\subset\calh(3)$.
\end{proof}

It would be interesting to determine properties of the $\calu(k)$ for $k\ge4$, particularly its dimension and number of irreducible components. The previous lemma tells us that $\dim\calu(k)\ge 5k+4$.

Another immediate consequence of Proposition \ref{prop p} is the following.

\begin{corollary}
The generic rank $2$ instanton bundle of charge $k$ is $(m(k)+2)$-Buchsbaum.
\end{corollary}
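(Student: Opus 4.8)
The plan is to reduce the statement to a single cohomological vanishing and to extract that vanishing from the fact that the generic instanton has natural cohomology. By Proposition \ref{prop p}, every rank $2$ instanton bundle $E$ of charge $k$ is $p$-Buchsbaum for some $m(k)+2\le p\le k$, so it is enough to rule out $p>m(k)+2$ for the generic $E$. Since $H^1_*(E)$ is generated in degree $-1$ by Theorem \ref{thm-JM}, the vanishing $h^1(E(m(k)+1))=0$ propagates to $h^1(E(t))=0$ for all $t\ge m(k)+1$, whence $\gothm^{m(k)+2}H^1_*(E)=0$; so it suffices to prove $h^1(E(m(k)+1))=0$ for the generic $E\in\cali(k)$.

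First I would replace $E$ by a generic member of $\cali(k)$: by \cite[Thm 0.1 (a)]{HH1} the subset $\caln(k)\subset\cali(k)$ of instanton bundles with natural cohomology is open and nonempty, so we may assume $E$ has natural cohomology. Next I would locate $m(k)+1$ relative to the roots $-2,\,-2\pm\sqrt{3k+1}$ of the Hilbert polynomial (\ref{hilb poly}). From the trivial inequality $\lfloor x\rfloor>x-1$ applied with $x=\sqrt{3k+1}-2$ one obtains $m(k)>\sqrt{3k+1}-3$, i.e. $m(k)+1>-2+\sqrt{3k+1}$. As $-2+\sqrt{3k+1}$ is the largest root of $P_E$ and $P_E$ has positive leading coefficient, this gives $P_E(m(k)+1)>0$.

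Finally I would feed this back into the natural cohomology hypothesis. We have $\chi(E(m(k)+1))=P_E(m(k)+1)>0$, while by natural cohomology at most one of the groups $H^i(E(m(k)+1))$, $i=0,\dots,3$, is nonzero; since the Euler characteristic is nonzero exactly one of them is, and since it is positive that one has even cohomological degree. In particular $H^1(E(m(k)+1))=0$, which is precisely what was required, and the Corollary follows.

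The arithmetic in the second step is the only delicate point, and it also marks where the borderline of the statement lies: when $3k+1$ is a perfect square --- equivalently, when the generic instanton of charge $k$ is supernatural, the first such charges being $k=1,5,8$ --- the integer $m(k)=-2+\sqrt{3k+1}$ is itself a root of $P_E$, so $P_E(m(k))=0$ and the same natural-cohomology argument then forces $h^1(E(m(k)))=0$ as well; for those $k$ the generic instanton is in fact $(m(k)+1)$-Buchsbaum, and the statement above should be read with that exception in mind.
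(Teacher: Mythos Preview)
Your argument is exactly the one the paper has in mind: take the lower bound $p\ge m(k)+2$ from Proposition~\ref{prop p}, then use natural cohomology of the generic instanton together with $m(k)+1>-2+\sqrt{3k+1}$ to force $h^1(E(m(k)+1))=0$ and hence $p\le m(k)+2$. The paper gives no explicit proof, simply calling the corollary an immediate consequence of Proposition~\ref{prop p}.

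Your closing paragraph actually goes beyond the paper and is a genuine correction to it. When $3k+1$ is a perfect square one has $m(k)=-2+\sqrt{3k+1}$ and $\chi(E(m(k)))=0$, so the strict inequality ``$\chi(E(t))<0$ for $-1\le t\le m(k)$'' used in the proof of Proposition~\ref{prop p} fails at $t=m(k)$; for such $k$ the generic (supernatural) instanton satisfies $h^1(E(m(k)))=0$ and is therefore $(m(k)+1)$-Buchsbaum, not $(m(k)+2)$-Buchsbaum. This also accounts for the paper's arithmetic slip ``$m(5)+2=3$'': in fact $m(5)=2$, but since $k=5$ is a supernatural charge the generic charge-$5$ instanton is indeed $3$-Buchsbaum, which is $m(5)+1$ rather than $m(5)+2$.
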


In particular, since $m(4)+2=m(5)+2=3$, the generic rank $2$ instanton bundle of charges $4$ and $5$ are $3$-Buchsbaum, while instanton bundles of charge $k\ge6$ are at least $4$-Buchsbaum.


\section{A $3$-Buchsbaum rank $2$ bundle with $c_1=-1$}\label{sec-count}

Theorem \ref{thm-equiv} tells us, in particular, that the first Chern class of every $1$- and $2$-Buchsbaum rank $2$ bundle on $\PP^{3}$ is zero. In this section, we show that the same is not true for $p$-Buchsbaum bundles with $p\ge 3$, providing an example of a $3$-Buchsbaum rank $2$ bundle with $c_1=-1$.

Indeed, consider the monad
\begin{equation}\label{c1=-1}
\OO_{\PP^3}(-2) \stackrel{\alpha}{\longrightarrow} \OO_{\PP^3}^{\oplus 2} \oplus \OO_{\PP^3}(-1)^{\oplus 2}
\stackrel{\beta}{\longrightarrow} \OO_{\PP^3}(1) ~~,
\end{equation}
which is the simplest example of a class of monads originally introduced by Ein in \cite[eq. 3.1.A]{Ein}. The existence of such monads can be easily established; consider for instance the following explicit maps
$$ \alpha = \left( \begin{array}{c} -z^2 \\ -w^2 \\ x \\ y \end{array} \right) ~~{\rm and}~~
\beta = \left(\begin{array}{cccc} ~x~ \\ ~y~ \\ ~z^2~ \\ ~w^2~ \end{array} \right) $$
where $[x:y:z:w]$ are homogeneous coordinates on $\PP^3$. 

Let $F$ denote the locally free cohomology of a monad of the form (\ref{c1=-1}); it is a rank $2$ bundle with $c_1(F)=-1$ and $c_2(F)=2$. Ein claims in \cite[p. 21]{Ein}, without proof, that $F$ is stable. For the sake of completeness, we include a proof below.

\begin{lemma}
Every locally free sheaf $F$ obtained as the cohomology of a monad of the form (\ref{c1=-1}) is stable.
\end{lemma}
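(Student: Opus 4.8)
The plan is to show that $F$ (rank $2$, $c_1(F)=-1$) has no destabilizing sub-line-bundle, i.e. that $H^0(F(a))=0$ for all $a\le 0$, since a rank $2$ bundle with odd $c_1$ is stable precisely when $H^0(F)=0$ and, more generally here, when no $\OO_{\PP^3}(a)$ with $2a\ge c_1(F)=-1$, i.e. $a\ge 0$, injects into $F$. Equivalently, it suffices to prove $H^0(F)=0$; but since $F$ is the cohomology of the monad \eqref{c1=-1} and the terms are sums of line bundles of nonpositive degree on the left and right, I will in fact directly compute $H^0(F(a))$ for $a\le 0$ from the monad's display.

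First I would split the monad \eqref{c1=-1} into its two short exact sequences in the usual way: writing $B = \OO_{\PP^3}^{\oplus 2}\oplus\OO_{\PP^3}(-1)^{\oplus 2}$ and $K=\ker\beta$, we have
$$
0 \longrightarrow K \longrightarrow B \stackrel{\beta}{\longrightarrow} \OO_{\PP^3}(1) \longrightarrow 0
\qquad\text{and}\qquad
0 \longrightarrow \OO_{\PP^3}(-2) \stackrel{\alpha}{\longrightarrow} K \longrightarrow F \longrightarrow 0.
$$
Twisting the first sequence by $\OO_{\PP^3}(a)$ and taking cohomology, $H^0(K(a))$ sits between $H^0(B(a))$ and $H^0(\OO_{\PP^3}(a+1))$, with connecting map into $H^1(K(a))$; since $H^1(B(a))=0$ always (sums of line bundles on $\PP^3$), one gets $H^0(K(a)) = \ker\big(H^0(B(a))\to H^0(\OO_{\PP^3}(a+1))\big)$. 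For $a\le 0$ we have $H^0(B(a)) = H^0(\OO_{\PP^3}(a))^{\oplus 2}$, which vanishes for $a<0$ and is $2$-dimensional (the constants) for $a=0$; one checks the map $\beta$ restricted to the constants, i.e. the degree-$1$ part of $\beta = (x,y,z^2,w^2)^{t}$ acting on $(\OO^{\oplus2},0,0)$, is $(s,t)\mapsto sx+ty$, which is injective. Hence $H^0(K(a))=0$ for all $a\le 0$. Then the second sequence twisted by $\OO_{\PP^3}(a)$ gives $H^0(F(a))\hookleftarrow$ nothing useful directly; rather $H^0(F(a))$ fits in $0\to H^0(\OO_{\PP^3}(a-2))\to H^0(K(a))\to H^0(F(a))\to H^1(\OO_{\PP^3}(a-2))$, and since $H^1(\OO_{\PP^3}(a-2))=0$ always, we get $H^0(F(a))$ is a quotient of $H^0(K(a))=0$, hence $H^0(F(a))=0$ for all $a\le 0$.

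It remains to phrase this as stability. Because $c_1(F)$ is odd, $F$ stable is equivalent to $F$ simple is equivalent to the absence of a line subbundle $\OO_{\PP^3}(a)\hookrightarrow F$ with $a\ge \lceil c_1(F)/2\rceil = 0$; the reflexivity of $F$ lets us upgrade an injection of sheaves $\OO_{\PP^3}(a)\hookrightarrow F$ to one with torsion-free, hence (on a surface section, or directly) saturated cokernel, so a destabilizing subsheaf can be taken to be a line bundle $\OO_{\PP^3}(a)$ with $a\ge0$, contradicting $H^0(F(-a))\supseteq H^0(F)=0$ shown above — more precisely contradicting $H^0(F(a'))=0$ for the relevant twist. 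I expect the only genuinely delicate point to be the bookkeeping at $a=0$: one must verify that the relevant component of $\beta$ is injective on the global sections of the degree-zero summands, which is where the explicit form of $\beta$ (or, for a general monad of this shape, the surjectivity of $\beta$ forcing its linear part to have maximal rank on constants) is used. Everything else is a routine diagram chase through the two short exact sequences, using only that intermediate cohomology of line bundles on $\PP^3$ vanishes.
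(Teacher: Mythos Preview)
Your argument is correct, and it reaches the same conclusion $h^0(F)=0$ as the paper, but by a different route. The paper invokes the Bohnhorst--Spindler criterion \cite{BS} to conclude that the kernel bundle $K=\ker\beta$ is semistable with $\mu(K)=-1$, whence $h^0(K)=0$; you instead compute $h^0(K)=0$ by hand, identifying $H^0(K)$ with the kernel of $H^0(\beta)\colon H^0(B)\to H^0(\op3(1))$ and checking that this map is injective on the two-dimensional space of constants. Your approach is more elementary, avoiding the external reference, but it does hinge on the claim that for \emph{any} surjective $\beta$ of this shape the two linear entries are linearly independent. You flag this as the delicate point and sketch the reason; to make it explicit: if the two linear forms were proportional, their common vanishing would be a plane $\Pi\cong\p2$, and the two quadratic entries would restrict to conics on $\Pi$ with a common zero (by B\'ezout), contradicting surjectivity of $\beta$. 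With that observation your proof is complete. Two minor remarks: the vanishing $H^0(K(a))=\ker(H^0(B(a))\to H^0(\op3(a+1)))$ follows immediately from left-exactness and does not need $H^1(B(a))=0$; and since $F$ is normalized with $c_1(F)=-1$, stability is already equivalent to $h^0(F)=0$, so the cases $a<0$ are superfluous.
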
  

\begin{proof}
First consider the kernel bundle $K:=\ker\beta$ defined by the sequence
$$ 0 \to K \to \op3^{\oplus 2} \oplus \op3(-1)^{\oplus 2} \stackrel{\beta}{\longrightarrow} \op3(1). $$
It follows from \cite[Thm. 2.7]{BS} that $K$ is semistable (but not stable). Therefore, since  $\mu(K)=-1$, we must have $h^0(K)=0$. Now, from the sequence
$$ 0 \to \op3(-2) \stackrel{\alpha}{\longrightarrow} K \to F \to 0 $$
we have that $h^0(F)=0$, which implies that $F$ is stable.
\end{proof}

We now show that the bundles considered in this Section are $3$-Buchsbaum.

\begin{proposition}
Every locally free sheaf $F$ obtained as cohomology of a monad of the form (\ref{c1=-1}) is $3$-Buchsbaum.
\end{proposition}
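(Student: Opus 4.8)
The plan is to compute the graded module $H^1_*(F)$ explicitly from the monad and show that $\gothm^3$ kills it but $\gothm^2$ does not. Break the monad (\ref{c1=-1}) into the two short exact sequences
$$ 0 \to \op3(-2) \xrightarrow{\alpha} K \to F \to 0 \quad\text{and}\quad 0 \to K \to \op3^{\oplus 2}\oplus\op3(-1)^{\oplus 2} \xrightarrow{\beta} \op3(1) \to 0, $$
where $K = \ker\beta$. Taking the long exact sequence in cohomology of the second sequence, twisted by $n$, and using $H^i_*(\op3(a))=0$ for $i=1,2$, one gets $H^1(K(n)) \cong \coker\big(H^0(\op3(n)^{\oplus 2}\oplus\op3(n-1)^{\oplus 2}) \to H^0(\op3(n+1))\big)$ and $H^2(K(n))=0$ for all $n$. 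Then the first sequence gives $H^1(F(n)) \cong H^1(K(n))$ for all $n$ (the connecting maps land in $H^2(\op3(n-2))=0$ on one side and come from $H^1(\op3(n-2))=0$ on the other), so $H^1_*(F) \cong H^1_*(K)$ as graded $S$-modules.

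So the task reduces to understanding the cokernel $C := \coker(\beta_*)$, where $\beta_* \colon S(-1)^{\oplus 2}\oplus S(-2)^{\oplus 2} \to S$ (degree shift: the monad map $\beta$ has entries $x,y,z^2,w^2$, so on global sections in degree $n$ it sends $(a_1,a_2,b_1,b_2)$ with $\deg a_i = n$, $\deg b_i = n-1$, to $xa_1 + ya_2 + z^2 b_1 + w^2 b_2 \in S_{n+1}$). Equivalently, $C = S/I$ where $I = (x,y,z^2,w^2)$, reindexed so that $C_n = S_{n+1}/I_{n+1} = (S/I)_{n+1}$. Now $S/(x,y,z^2,w^2) \cong \KK[z,w]/(z^2,w^2)$, which is the Artinian Gorenstein ring with Hilbert function $1,2,1$ concentrated in degrees $0,1,2$. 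Reindexing by $n \mapsto n+1$: $H^1(F(n)) = H^1(K(n))$ has dimension $1,2,1$ for $n = -1, 0, 1$ respectively and vanishes otherwise. In particular $h^1(F(1))\neq 0$ and $h^1(F(2)) = 0$, so $F$ is at least $3$-Buchsbaum; and since the socle of $\KK[z,w]/(z^2,w^2)$ sits in top degree $2$, i.e. degree $n=1$ of $H^1_*(F)$, and $\gothm$ maps the degree $n=0$ piece nontrivially to this socle, we have $\gothm^2 H^1_*(F) \neq 0$ while $\gothm^3 H^1_*(F) = 0$. Hence $F$ is exactly $3$-Buchsbaum.

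The main obstacle is the careful bookkeeping of degree shifts: one must be sure that the graded $S$-module structure on $H^1_*(K)$ really is (a shift of) $\KK[z,w]/(z^2,w^2)$ and not merely that the dimensions match — the module structure is what distinguishes "$3$-Buchsbaum" from "the extremal twist where $h^1$ vanishes". This is where the specific form of the maps $\alpha,\beta$ matters: the identification $H^1(K(n)) = \coker(\beta_*)_n$ is as $S$-modules because the cohomology long exact sequence is $S$-linear, so the module structure is genuinely that of $S/(x,y,z^2,w^2)$. A secondary point worth a remark is independence of the choice of monad within the family (\ref{c1=-1}): one should either argue that any such $\beta$ has entries forming a regular sequence of the appropriate degrees $1,1,2,2$ (so that $S/(\text{entries of }\beta)$ is always a complete intersection with Hilbert function $1,2,1$), which forces the same conclusion, or simply state the Proposition for the explicit $F$ constructed above. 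I would take the first route, noting that a degree-$(1,1,2,2)$ sequence failing to be regular would make $\im\beta_*$ too small, contradicting surjectivity of $\beta$ as a map of sheaves (which is part of the monad hypothesis).
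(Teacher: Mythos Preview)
Your argument is correct and takes a genuinely different route from the paper's. The paper's proof uses Proposition~\ref{prop-CMR} to obtain $h^1(F(2))=0$ and then, rather than computing $h^1(F(1))$, argues by contradiction: if $h^1(F(1))=0$ then (since $H^1_*(F)$ is generated in degree $-1$ by Theorem~\ref{thm-JM}) $F$ would be at most $2$-Buchsbaum, which is impossible by Theorem~\ref{thm-equiv} because $c_1(F)=-1$. You instead identify $H^1_*(F)$ explicitly with a shift of the Artinian complete intersection $S/(\ell_1,\ell_2,q_1,q_2)$ and read off both the Hilbert function $1,2,1$ and the module structure directly. This avoids invoking the classification Theorem~\ref{thm-equiv} altogether, making your argument self-contained, and it yields strictly more information (the full cohomology table of $F$) as a byproduct. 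Your treatment of the general monad is also sound: surjectivity of $\beta$ as a sheaf map means the four entries have empty common zero locus in $\PP^3$, so they form a system of parameters in the Cohen--Macaulay ring $S$ and hence a regular sequence, giving the same Hilbert series $(1+t)^2$ regardless of the specific $\beta$. The paper's argument is shorter and leans on external input; yours is more hands-on and more informative.
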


\begin{proof}
By Theorem \ref{thm-JM}, we get that $H^1_*(F)$ is generated in $H^1(F(-1))$. On the other hand, Proposition \ref{prop-CMR} tells us that $F$ is $3$-regular, thus $h^1(F(2))=0$. 

If we also had $h^1(F(1))=0$, $F$ would be $2$-Buchsbaum, which, by Theorem \ref{thm-equiv} cannot happen. Therefore $F$ must be $3$-Buchsbaum.
\end{proof}

Note also that, since $h^0(F(1))=h^1(F(1))=1$ \cite[2.2]{HS}, such bundles do not have natural cohomology.

Based on the evidence here presented and also motivated by results due to Roggero and Valabrega in \cite{RV}, especially Propositions 5 and 6 and Theorem 2 there, we propose the following classification of $3$-Buchsbaum rank $2$ bundles on $\PP^{3}$.

\begin{conjecture}
Every normalized, $3$-Buchsbaum rank $2$ bundle on $\PP^{3}$ is either an instanton bundle of charge $3$, $4$ or $5$, if $c_1=0$, or the cohomology of a monad of the form (\ref{c1=-1}), if $c_1=-1$.
\end{conjecture}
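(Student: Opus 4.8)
We outline a possible approach. Let $E$ be a normalized $3$-Buchsbaum rank $2$ bundle on $\PP^3$, so that $c_1(E)\in\{0,-1\}$, and set $M:=H^1_*(E)$, a finite-length graded $S$-module with $\gothm^3M=0$ and $\gothm^2M\neq0$. The plan is to reduce everything to the minimal Horrocks monad of $E$ produced by Theorem \ref{thm-JM}: its middle term $F$ has $H^1_*(F)=H^2_*(F)=0$ and hence is a direct sum of line bundles by Horrocks' splitting criterion. Writing $\mu_d$ for the number of minimal generators of $M$ in degree $d$ and $g:=\sum_d\mu_d$, and using $E^\lor\simeq E(-c_1)$ to identify $H^1_*(E^\lor\otimes\omega_{\PP^3})$ with $M(-c_1-4)$, the monad acquires the combinatorial form
$$
\bigoplus_d\OO_{\PP^3}(d+c_1)^{\mu_d}\ \stackrel{\alpha}{\longrightarrow}\ \bigoplus_{s=1}^{2+2g}\OO_{\PP^3}(e_s)\ \stackrel{\beta}{\longrightarrow}\ \bigoplus_d\OO_{\PP^3}(-d)^{\mu_d};
$$
since minimal Horrocks monads are unique up to isomorphism, dualizing this monad and comparing with its $(-c_1)$-twist shows that the multiset $\{e_s\}$ is invariant under $e\mapsto c_1-e$ and that $\sum_s e_s=c_1(1+g)$. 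The problem thus becomes: which monads of this shape have locally free rank $2$ cohomology with the prescribed $c_1$ and with $M$ annihilated by $\gothm^3$ but not by $\gothm^2$?

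The second step would be to prove that $E$ is semistable, hence that $h^0(E(-1))=0$ for $c_1=0$ and $h^0(E)=0$ for $c_1=-1$. Otherwise there is a destabilizing saturated sub-line-bundle $\OO_{\PP^3}(a)\hookrightarrow E$ with $2a>c_1$, presenting $E$ as an extension $0\to\OO_{\PP^3}(a)\to E\to\mathcal I_Z(c_1-a)\to0$ with $Z$ an effective curve (a local complete intersection, since $E$ is locally free); the Serre construction then forces $\omega_Z\cong\OO_Z(c_1-2a-4)$, whence $p_a(Z)=1-\tfrac12(2a-c_1+4)\deg Z\le 1-\tfrac52\deg Z$, which is strictly smaller than the minimum $1-\deg Z$ of the arithmetic genus of a curve of that degree in $\PP^3$ — a contradiction. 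With semistability in hand, the Serre-duality identity $h^2(E(t))=h^1(E(-c_1-4-t))$ together with Riemann--Roch — which, since $c_3(E)=0$, determines the Hilbert polynomial of $E$ from $c_1$ and $c_2$, as in (\ref{hilb poly}) — reduces the classification to pinning down the set of degrees on which $M$ is supported and the degrees of its minimal generators.

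Granting that this support equals $\{-1,0,1\}$ and that $M$ is generated in degree $-1$ — the crucial point, addressed below — the monad above simplifies to
$$
\OO_{\PP^3}(c_1-1)^{\oplus g}\ \longrightarrow\ \bigoplus_{s=1}^{2+2g}\OO_{\PP^3}(e_s)\ \longrightarrow\ \OO_{\PP^3}(1)^{\oplus g},
$$
with $\{e_s\}$ symmetric about $c_1/2$ and of total degree $c_1(1+g)$. When $c_1=0$ and all $e_s$ vanish this is exactly the instanton monad (\ref{def-inst}) of charge $g$; Proposition \ref{prop p}, combined with the observation from (\ref{hilb poly}) that $\chi(E(2))<0$, hence $h^1(E(2))\neq0$, once $g\ge6$, then forces $g\in\{3,4,5\}$. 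When $c_1=-1$ the identity $g=-\chi(E(-1))$ coming from Riemann--Roch, together with the elimination of the larger admissible middle terms discussed below, would pin $g=1$ and the middle term $\OO_{\PP^3}^{\oplus2}\oplus\OO_{\PP^3}(-1)^{\oplus2}$, recovering (\ref{c1=-1}); one then checks that each surviving monad does have locally free cohomology, so that the listed families are non-empty.

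The hard part, and the reason this remains only a conjecture, is exactly the crucial point just assumed: the bookkeeping above does not by itself rule out monads in which $M$ is supported on a shifted window such as $\{-2,-1,0\}$, nor monads whose middle term contains summands $\OO_{\PP^3}(\pm r)$ with $r\ge1$ (for instance, for $c_1=-1$ the shape $\OO_{\PP^3}(-2)^{\oplus2}\to\OO_{\PP^3}^{\oplus3}\oplus\OO_{\PP^3}(-1)^{\oplus3}\to\OO_{\PP^3}(1)^{\oplus2}$ is numerically consistent and has first Chern class $-1$). To exclude these one must use that $\alpha$ has to be a subbundle inclusion: a dimension count shows that for such ``fat'' monads the group $H^0(\ker\beta)$, suitably twisted, is too small for a generic $\alpha$ to have empty degeneracy locus, so the cohomology acquires torsion and fails to be locally free — precisely the mechanism that makes the Ein monad (\ref{c1=-1}) work while killing its larger analogues. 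Turning this into a complete finite classification, with the regularity estimate of Proposition \ref{prop-CMR} bounding the top of the window and the structure theorems of Roggero and Valabrega \cite{RV} on rank $2$ bundles with small deficiency module controlling the remaining cases, is what would still need to be carried out.
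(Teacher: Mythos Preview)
The statement you are addressing is labeled in the paper as a \textbf{Conjecture}, not a theorem: the authors offer it as a proposed classification ``based on the evidence here presented'' and do not prove it. There is therefore no proof in the paper to compare your attempt against. You seem aware of this --- you call your write-up an ``outline of a possible approach'' and explicitly flag ``the reason this remains only a conjecture'' --- so what you have produced is a strategy sketch rather than a proof, and it should be read as such.

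As a strategy, the skeleton is reasonable: pass to the minimal Horrocks monad via Theorem~\ref{thm-JM}, use self-duality $E^\lor\simeq E(-c_1)$ to make the monad symmetric, and then try to pin down the generator degrees of $M=H^1_*(E)$ and the twists in the middle term. You correctly isolate the genuine obstruction: nothing in the $3$-Buchsbaum hypothesis alone forces the support of $M$ to sit in $\{-1,0,1\}$ or forces the middle term to be a sum of copies of $\OO_{\PP^3}$ and $\OO_{\PP^3}(-1)$, and ruling out ``fat'' monads by showing $\alpha$ cannot be a subbundle map is exactly the kind of case analysis the paper leaves open. One technical point to revisit, however, is your semistability step: the claimed lower bound $p_a(Z)\ge 1-\deg Z$ for locally complete intersection curves in $\PP^3$ is not valid in general (already disconnected curves and certain multiple structures violate it), so the genus contradiction as written does not go through. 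Semistability of $3$-Buchsbaum rank~$2$ bundles would itself need an independent argument, and is part of what makes the conjecture nontrivial.
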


\bigskip

Finally, let us comment on $p$-Buchsbaum rank $2$ bundles on $\p3$ for $p\ge4$. An interesting, possible source of examples of such bundles is provided by Ein's \emph{generalized nullcorrelation bundles}, described in \cite{Ein}. These are bundles obtained as cohomologies of monads of the following two types:

\begin{equation}\label{gncb}
\OO_{\PP^3}(-d) \longrightarrow \OO_{\PP^3}(-b) \oplus \OO_{\PP^3}(-a) \oplus \OO_{\PP^3}(a) \oplus \OO_{\PP^3}(b) \longrightarrow \OO_{\PP^3}(d) ~~,
\end{equation}
and
\begin{equation}\label{gncbmenus}
\OO_{\PP^3}(-d-1) \longrightarrow \OO_{\PP^3}(-b-1) \oplus \OO_{\PP^3}(-a-1) \oplus \OO_{\PP^3}(a) \oplus \OO_{\PP^3}(b) \longrightarrow \OO_{\PP^3}(d) ~~,
\end{equation}
where $d > b \geq a \geq 0$. Let us denote the cohomology of such monads by $E_{a,b,d}$ and $F_{a,b,d}$, respectively.

Note that, by Theorem \ref{thm-JM} and Proposition \ref{prop-CMR}, $H^1_*(E_{a,b,d})$ is generated in degree $-d$, and that $E_{a,b,d}$ is $(3d-2)$-regular when $d\ge1$. Therefore, such bundles are at most $(4d-3)$-Buchsbaum, being precisely $(4d-3)$-Buchsbaum provided $h^1(E_{a,b,d}(3d-4))\ne0$.

Similarly, note that $H^1_*(F_{a,b,d})$ is generated in degree $-d$, and that $F_{a,b,d}$ is $3d$-regular. Therefore, such bundles are at most $(4d-1)$-Buchsbaum, being precisely $(4d-1)$-Buchsbaum provided $h^1(F_{a,b,d}(3d-2))\ne0$.



\end{document}